\documentclass[11pt]{article}
\usepackage{amsmath}
\usepackage{amsthm}
\usepackage{amsfonts}
\usepackage{amssymb}
\usepackage{adjustbox}
\usepackage{authblk}
\usepackage{graphicx}
\usepackage{color}
\usepackage{rotating}
\usepackage{url}
\usepackage{algorithm}
\usepackage{algpseudocode}
\usepackage{epstopdf}
\usepackage{adjustbox}
\usepackage[normalem]{ulem}
\useunder{\uline}{\ul}{}
\usepackage[T1]{fontenc}
\usepackage[utf8]{inputenc}
\usepackage[font=small,labelfont=bf]{caption}
\usepackage{hyperref}

\setlength{\textheight}{8.0in}
\setlength{\textwidth}{6.2in}
\setlength{\headheight}{0in}
\addtolength{\topmargin}{-.5in}
\addtolength{\oddsidemargin}{-.5in}

\newtheorem{theorem}{Theorem}

\newtheorem{lemma}{Lemma}

\newtheorem{observation}{Observation}

\DeclareMathOperator*{\convex}{conv}

\DeclareMathOperator*{\rr}{\mathbb{R}}

\newcommand{\socpr}{SOCr}
\newcommand{\socp}{SOCP}

\title{The convex hull of a quadratic constraint over a polytope}
\author[1]{Asteroide Santana\thanks{asteroide.santana@gatech.edu}}
\author[1]{Santanu S. Dey\thanks{santanu.dey@isye.gatech.edu}}
\affil[1]{\small School of Industrial and Systems Engineering, Georgia Institute of Technology}

\begin{document}
\maketitle
\begin{abstract}
A quadratically constrained quadratic program (QCQP) is an optimization problem in which the objective function is a quadratic function and the feasible region is defined by quadratic constraints. Solving non-convex QCQP to global optimality is a well-known NP-hard problem and a traditional approach is to use convex relaxations and branch-and-bound algorithms. This paper makes a contribution in this direction by showing that the exact convex hull of a general quadratic equation intersected with any bounded polyhedron is second-order cone representable. We present a simple constructive proof of this result.
\end{abstract}

\section{Introduction}\label{sec:intro}

A quadratically constrained quadratic program (QCQP) is an optimization problem in which the objective function is a quadratic function and the feasible region is defined by quadratic constraints. A variety of complex systems can be cast as an instance of a QCQP.  Combinatorial problems like MAXCUT~\cite{goemans1994879}, engineering problems such as signal processing~\cite{gharanjik2016iterative,KhabbazibasmenjVorobyov2014}, chemical process~\cite{haverly1978studies, meyer2006global, alfaki2013strong,dey2015analysis, gupte2017relaxations, tawarmalani2002convexification} and power engineering problems such as the optimal power flow \cite{BoseGayme2015,LiVittal2018,ChenAtamturkOren2016,kocuk2016strong} are just a few examples.

Solving non-convex QCQP to global optimality is a well-know NP-hard problem and a traditional approach is to use spacial branch-and-bound tree based algorithm. The computational success of any branch-and-bound tree based algorithm depends on the convexification scheme used at each node of the tree. Not surprisingly, there has been a lot of research on deriving strong convex relaxations for general-purpose QCQPs.  The most common relaxations found in the literature are based on Linear
programming (LP), second order cone programing (SOCP) or semi-definite programming
(SDP). Reformulation-linearization technique (RLT)~\cite{sherali2013reformulation,sherali1992new} is a LP-based hierarchy, Lasserre hierarchy or the sum-of-square hierarchy~\cite{lasserre2001global} is a SDP-based hierarchy which exactly solves QCQPs under some minor technical conditions and, recently, new  LP and SOCP-based alternatives to sum of squares optimization have also been proposed~\cite{ahmadi2014dsos}. While SDP relaxations are know to be strong, they don't always scale very well computationally. SOCP relaxations tend to be more computationally attractive, although they are often derived by further relaxing SDP relaxations \cite{Burer2014}.

Another direction of research focuses on convexification of functions, with the McCormick relaxation~\cite{mccormick1976computability} being perhaps the most classic example. In this case, a constraint of the form $f(x) = b$ is replaced with $\breve{f}(x) \leq b$ and $\hat{f}(x) \geq b$, where $\breve{f}$ is a convex lower approximation and $\hat{f}$ is a concave upper approximation of $f$. While there have been a lot of work in function convexification (see for instance \cite{al1983jointly,sherali1990explicit,androulakis1995alphabb,ryoo2001analysis,liberti2003convex,
benson2004concave,meyer2004trilinear,anstreicher2010computable,belotti2010valid,
bao2015global,misener2015dynamically,del2016polyhedral,sherali1997convex,Rikun1997,
meyer2005convex,tawarmalani2002convexification,tuy2016convex,locatelli2016polyhedral,
buchheim2017monomial,crama2017class,Adams2018,boland2017extended,speakman2017quantifying}) it is well-known that it does not necessarily yield the convex hull of the set $\{x \,|\, f(x) = b\}$. To the best of our knowledge, there have been much less work on explicit convexification of sets:~\cite{tawarmalani2013explicit,nguyen2013deriving,nguyen2011convexification,tawarmalani2010strong,
akshayguptethesis,kocuk2017matrix,rahman2017facets,davarnia2017simultaneous,LiVittal2018,Burer2017}. 

A related question when studying convex relaxations is that of representability of the exact convex hull of the feasible set: Is it LP, \socp \ or SDP representable? In \cite{DeySantana2018}, we prove that the convex hull of the so-called bipartite bilinear constraint (which is a special case of a quadratic constraint) intersected with a box constraint is \socp \ representable (\socpr). The proof yields a procedure to compute this convex hull exactly. Encouraging computational results are also reported in~\cite{DeySantana2018} in terms of obtaining dual bounds using this construction, which significantly outperform SDP and McCormick relaxations and also bounds produced by commercial solvers.

\section{Our result} \label{section: our result}
For an integer $t \ge 1$, we use $[t]$ to describe the set $\{1, \dots, t\}$. For a set $G \subseteq \mathbb{R}^n$, we use $\textup{conv}(G)$, $\textup{extr}(G)$ to denote the convex hull of $G$ and the set of extreme points of $G$ respectively.

In this paper, we generalize one of the main result in \cite{DeySantana2018}. Specifically, we show that the convex hull of a \textit{general} quadratic equation intersected with \textit{any} bounded polyhedron is \socpr. Moreover the proof is constructive, therefore adding to the literature on explicit convexification in the context of QCQPs. The formal result is as following:
\begin{theorem}\label{thm:1}
Let
\begin{eqnarray}
S: =\{ x \in \mathbb{R}^n\,|\, \ x^{\top}Qx + \alpha^{\top} x = g, \ x \in P\},\label{setS}
\end{eqnarray}
where $Q\in\mathbb{R}^{n\times n}$ is a symmetric matrix, $\alpha \in \mathbb{R}^n$, $g\in\mathbb{R}$ and $P := \{x \,|\, Ax \leq b\}$ is a polytope.  Then $\textup{conv}(S)$ is \socpr. 
\end{theorem}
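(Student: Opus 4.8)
The plan is to split the equation $q(x)=0$, where $q(x):=x^{\top}Qx+\alpha^{\top}x-g$, into the two inequalities $q\le 0$ and $q\ge 0$, and reduce everything to showing that the convex hull of a single quadratic inequality over a polytope is \socpr{}. Write $P^{-}:=\{x\in P:q(x)\le 0\}$ and $P^{+}:=\{x\in P:q(x)\ge 0\}$, so that $S=P^{-}\cap P^{+}$. The first step is the identity $\textup{conv}(S)=\textup{conv}(P^{-})\cap\textup{conv}(P^{+})$. The inclusion ``$\subseteq$'' is clear. For ``$\supseteq$'', let $z$ lie in the right-hand side; if $q(z)=0$ then $z\in S$, so suppose $q(z)<0$ (the case $q(z)>0$ being symmetric). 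Since $z\in\textup{conv}(P^{+})$, write $z=\sum_{j}\mu_{j}b_{j}$ with $b_{j}\in P^{+}$, $\mu_{j}>0$, $\sum_{j}\mu_{j}=1$. On the segment $[z,b_{j}]\subseteq P$ the function $t\mapsto q(z+t(b_{j}-z))$ is a polynomial of degree at most two, negative at $t=0$ and nonnegative at $t=1$, hence vanishes at some $t_{j}\in(0,1]$, and $s_{j}:=z+t_{j}(b_{j}-z)\in S$. Substituting $b_{j}=z+t_{j}^{-1}(s_{j}-z)$ into $z=\sum_{j}\mu_{j}b_{j}$ gives $\sum_{j}(\mu_{j}/t_{j})(s_{j}-z)=0$, so $z=\bigl(\sum_{j}\mu_{j}/t_{j}\bigr)^{-1}\sum_{j}(\mu_{j}/t_{j})\,s_{j}$ is a convex combination of points of $S$. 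As \socpr{} sets are closed under intersection and the claim for $P^{+}$ is the claim for $P^{-}$ with $q$ replaced by $-q$, it remains to show that $\textup{conv}(P^{-})$ is \socpr{}.

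To analyze $\textup{conv}(P^{-})$, diagonalize $Q$ and complete squares in its range to write $q(x)=\sum_{i\in I_{+}}\ell_{i}(x)^{2}-\sum_{i\in I_{-}}m_{i}(x)^{2}+\beta^{\top}x+\gamma$ with $\ell_{i},m_{i}$ affine, $\beta$ in the kernel of $Q$, and $\gamma\in\mathbb{R}$. Decompose $P$ into the polytopes $P_{\tau}:=P\cap\bigcap_{i\in I_{-}}\{\tau_{i}m_{i}(x)\ge 0\}$ for $\tau\in\{-1,1\}^{|I_{-}|}$; on $P_{\tau}$ one has $\sum_{i\in I_{-}}m_{i}(x)^{2}=\|(m_{i}(x))_{i}\|_{2}^{2}$ while the affine form $n_{\tau}(x):=\sum_{i\in I_{-}}\tau_{i}m_{i}(x)=\|(m_{i}(x))_{i}\|_{1}$ is nonnegative and dominates the Euclidean norm. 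Since the convex hull of a finite union of compact \socpr{} sets is \socpr{} (the standard disjunctive/perspective construction) and $P^{-}=\bigcup_{\tau}(P^{-}\cap P_{\tau})$, it is enough to treat each $\textup{conv}(P^{-}\cap P_{\tau})$. Setting $c(x):=\sum_{i\in I_{+}}\ell_{i}(x)^{2}+\beta^{\top}x+\gamma$, the set $P^{-}\cap P_{\tau}$ is contained in $\{x\in P_{\tau}:c(x)\le 0\}\cup\{x\in P_{\tau}:0\le c(x)\le n_{\tau}(x)^{2}\}$; the first of these is convex and \socpr{} (a polytope intersected with the sublevel set of a convex quadratic), so the work concentrates on the second, in which the only nonconvexity is the single ``negative square'' $n_{\tau}(x)^{2}$, and on proving that the inclusion above becomes an equality after taking convex hulls.

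The main obstacle is this last set, namely showing that $\textup{conv}\bigl(\{x\in P_{\tau}:0\le c(x)\le n_{\tau}(x)^{2}\}\bigr)$ is \socpr{} and equals $\textup{conv}(P^{-}\cap P_{\tau})\setminus\{c\le 0\}$-part, for two reasons. First, the passage from $\|(m_{i}(x))_{i}\|_{2}$ to the affine form $n_{\tau}(x)$ must be shown to be \emph{exact} inside $\textup{conv}(P^{-})$; this carries the constructive content, since one must write an arbitrary point of the relaxation as an explicit convex combination of points genuinely satisfying $q\le 0$, distributing the Euclidean-versus-$\ell_{1}$ slack over a family of points of $P$ obtained by perturbing one coordinate $m_{i}(x)$ at a time along directions that leave $c$ and the remaining forms controlled, and closing the combination by a pairing/accounting argument across the sign pattern $\tau$. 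Second, the residual linear term $\beta^{\top}x$ (in the kernel of $Q$) keeps $c$ from being a sum of squares globally, so $\sqrt{c^{+}}$ need not be convex; here I would exploit that $q$ is affine along $\beta$ to describe $P^{-}\cap P_{\tau}$ as the region lying below the graph of a difference-of-convex function over a polytope, and obtain its convex hull from the concave envelope of that function, which after lifting with the scalar $n_{\tau}(x)$ is \socpr{}. Assembling the disjunctive formulas over all $\tau$, and then intersecting the descriptions built for $q$ and for $-q$, produces the explicit \socp{} representation of $\textup{conv}(S)$.
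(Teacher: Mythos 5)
Your first step is fine: the identity $\textup{conv}(S)=\textup{conv}(P^{-})\cap\textup{conv}(P^{+})$ is exactly the result the paper quotes as Theorem~\ref{thm:2} (from Tawarmalani et al.), and your segment argument proves it correctly. The gap is everything after that. You reduce the theorem to the claim that $\textup{conv}(\{x\in P: q(x)\le 0\})$ is \socpr \ for an arbitrary indefinite quadratic $q$, but this claim is essentially as hard as the theorem itself (conversely it \emph{follows} from the theorem, since every extreme point of $P^{-}$ is either a vertex of $P$ or lies on $\{q=0\}$), so the reduction does not make progress unless you actually prove it. Your plan for it leaves the two decisive steps unproven, and you say so yourself: (i) that replacing $\sum_{i\in I_{-}}m_i(x)^2$ by the affine square $n_\tau(x)^2$ on each sign-pattern piece is \emph{exact after taking convex hulls}, i.e.\ that every point of $\{x\in P_\tau: 0\le c(x)\le n_\tau(x)^2\}$ lies in $\textup{conv}(P^{-}\cap P_\tau)$ — the proposed ``pairing/accounting argument'' is not given and is not routine; and (ii) that the residual set with one negative square has an \socpr \ convex hull via a ``concave envelope after lifting'' — concave envelopes of difference-of-convex quadratics over polytopes are not generally available, and this residual set $\{x\in P_\tau: c(x)-n_\tau(x)^2\le 0\}$ is itself an indefinite quadratic inequality over a polytope (positive rank $|I_{+}|$, negative rank one), i.e.\ another instance of the problem you set out to solve. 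In the paper's case analysis this configuration (at least two squares of one sign and one of the other) is precisely a ruled-surface case that is \emph{not} handled by any convexity or envelope argument, so there is no reason to expect your envelope step to go through as described.

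The idea your proposal is missing is the paper's treatment of these hard cases: after reducing to the diagonal canonical form, either the quadric splits into two convex pieces (then Theorem~\ref{thm:2} plus the Hillestad--Jacobsen polyhedrality of a reverse-convex set intersected with a polytope finishes), or the surface is ruled, so no point in the interior of $P$ can be an extreme point of $S$; one then intersects $S$ with each facet of $P$, which produces sets of the same form in one dimension fewer, and recurses, gluing the facet hulls with the disjunctive (finite-union) \socp \ construction. Without a proof of your exactness claim (i) and a concrete, correct replacement for step (ii) — or an induction of this facet-recursion type — the proposal does not establish the theorem.
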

Notice that we make no assumption regarding the structure or coefficients of the quadratic equation defining $S$. We require $P$ to be a bounded polyhedron, which is not very restrictive given that in global optimization the variables are often assumed to be bounded to use branch-and-bound algorithms. 

The result presented in Theorem~\ref{thm:1} is somewhat unexpected since the sum-of-squares approach would build a sequence of SDP relaxations for (\ref{setS}) in order to optimize (exactly) a linear function over $S$, while even the SDP cone of thre-by-three dimensional matrices is not \socpr \ ~\cite{fawzi2018representing}. Note that optimizing a linear function over $S$ is NP-hard, therefore, while the convex hull is \socpr, the construction involves the introduction of an exponential number of variables. 

Surprisingly, the proof of Theorem~\ref{thm:1} is fairly straightforward and it introduces a technique (new, to the best of our knowledge) to compute convex hull of certain surfaces over a compact set. In the case of Theorem~\ref{thm:1}, the key observation is that the surface defined by the quadratic equation either:
\begin{enumerate}
\item is defined as the union of two convex surfaces (see Figure~\ref{fig: two convex pieces}); or
\item it has the property that, through every point of the surface, there exists a \textit{straight} line that is \textit{entirely} contained in the surface (see Figure~\ref{fig: straight line}).
\end{enumerate}

\begin{figure}[h]
    \centering
    \begin{minipage}{0.48\textwidth}
        \centering
		\includegraphics[scale=0.32]{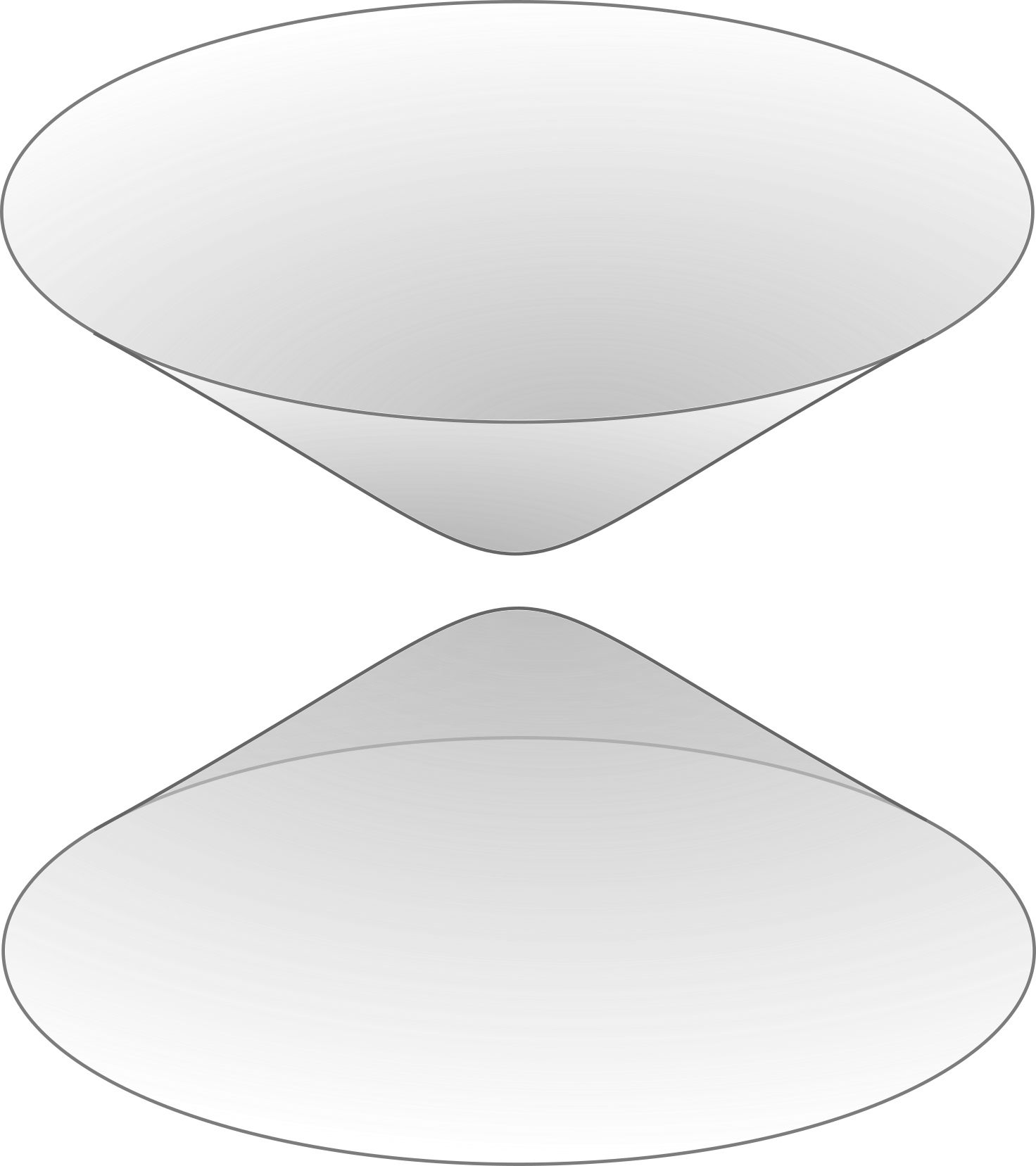}
        \caption{Two-sheets hyperboloid. The surface is the union of two convex peices.}
        \label{fig: two convex pieces}
    \end{minipage}%
    \hspace{0.3cm}
    \begin{minipage}{0.48\textwidth}
        \centering
        \includegraphics[scale=0.355]{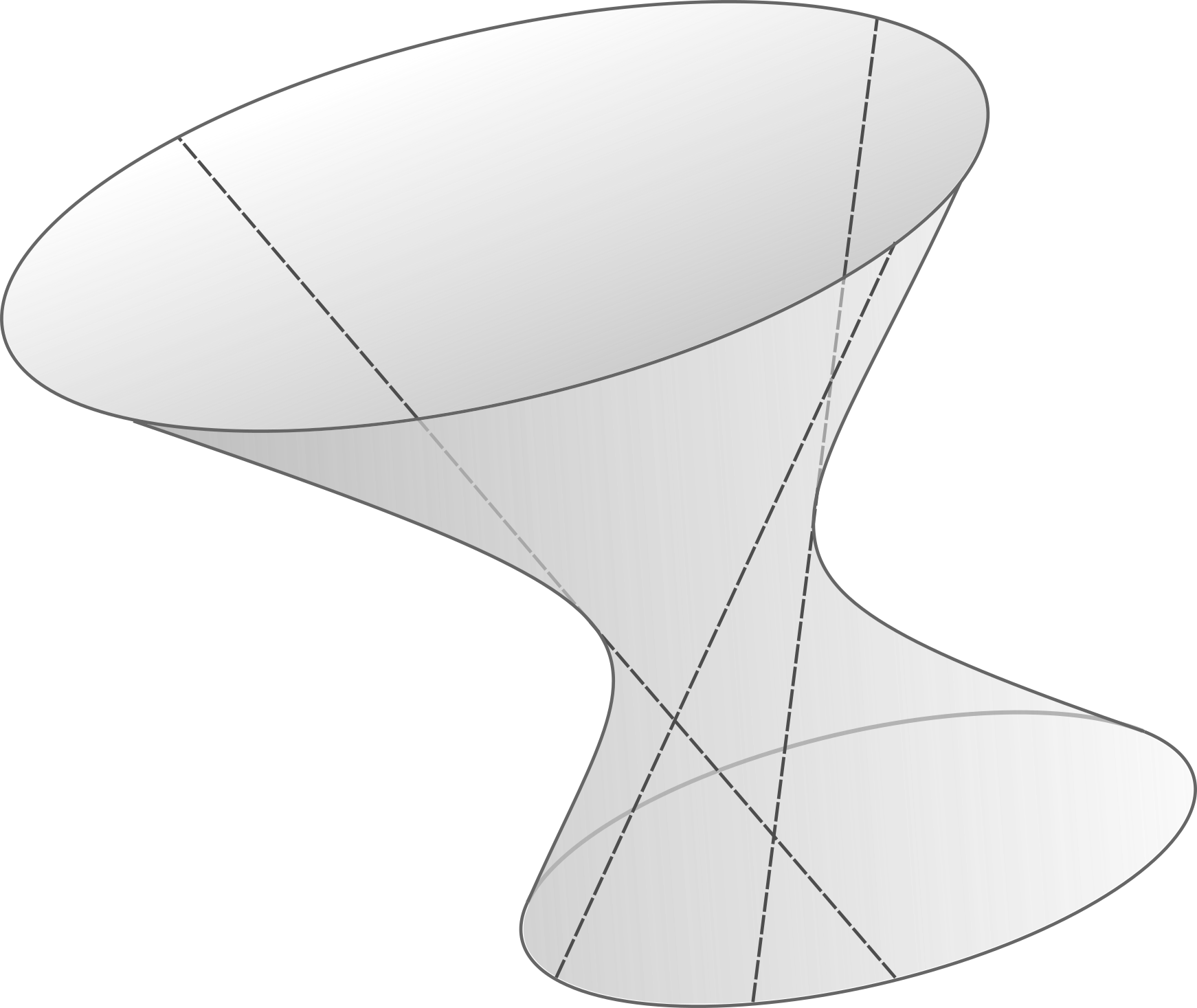} 
        \caption{One-sheet hyperboloid. Through every point of the surface, there exists a straight line that is entirely contained in the surface.}
        \label{fig: straight line}
    \end{minipage}
\end{figure}

In Case~1, we can easily obtain that the convex hull of $S$ is \socpr \ as we show in Section~\ref{section: proof of thm1}. In Case~2, no point in the interior of the polytope can be an extreme point of $S$. Observing that the convex hull of a compact set is also the convex hull of its extreme points, we intersect the surface with each facet of the polytope which will contain all the extreme points of $S$. Now, each such intersection leads to new sets with the same form as $S$ but in one dimension lower. The argument then goes by recursion. The details of the proof are presented in Section~\ref{sec:thm1}.

After we had proved Theorem~\ref{thm:1}, we learned that the property described in Case~2 is known as ``ruled surfaces'' and it has been extensively studied from both algebraic and geometric perspectives \cite{edge2011theory}. To the best of our knowledge, however, no one from the global optimization community has ever exploited such results for convexification.

\section{Proof of Theorem~\ref{thm:1}}\label{sec:thm1}
\subsection{Convex hulls via disjunctions}\label{section: convex hulls via disjunctions}

In this section, we describe a simple procedure to obtain the convex hull of a compact set $S$ using a disjunctive argument. We use this procedure to prove Theorem~\ref{thm:1} in Section~\ref{section: proof of thm1}.
Let $S$ be a compact set and let $\textup{extr}(S)$ be the set of extreme points of $S$. First, we partition the extreme points of $S$. Specifically, suppose there exist $B^1, \dots, B^k \subseteq S$ such that:
\begin{eqnarray}
S \supseteq \bigcup_{i = 1}^k B^i \supseteq \textup{extr}(S). \label{eq:A}
\end{eqnarray}
We observe that (\ref{eq:A}) implies that 
\begin{eqnarray}
\textup{conv}\left(S\right) \supseteq \textup{conv}\left(\bigcup_{i = 1}^k B^i \right) \supseteq \textup{conv}\left(\textup{extr}(S)\right) = \textup{conv}\left(S\right),
\end{eqnarray}
where the last equality holds due to $S$ being compact. Finally, we obtain that 
\begin{eqnarray}
\textup{conv}\left(S\right) = \textup{conv}\left(\bigcup_{i = 1}^k B^i \right) = \textup{conv}\left(\bigcup_{i = 1}^k \textup{conv}\left(B^i\right) \right). \label{eq:disj}
\end{eqnarray}

\begin{observation}\label{disjunctionSOC}
If $\textup{conv}(B^i)$ is \socpr \ for all $i \in [k]$, then the set 
$$\textup{conv}\left(\bigcup_{i = 1}^k \textup{conv}\left(B^i\right) \right),$$
is \socpr~\cite{ben2001lectures}.  Thus, we obtain from (\ref{eq:disj}) that $\textup{conv}(S)$ is \socpr. In addition, we obtain a constructive procedure to compute $\textup{conv}(S)$.
\end{observation}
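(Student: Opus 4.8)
The plan is to prove the general fact—attributed to Ben-Tal and Nemirovski—that the convex hull of a finite union of \socpr\ convex sets is itself \socpr, by the standard lifting-and-homogenization (perspective) construction, and then to check that in the present bounded setting no closure complications arise. Write $C_i := \textup{conv}(B^i)$. By hypothesis each $C_i$ is \socpr, so there exist matrices $M_i^j, N_i^j$, vectors $p_i^j, c_i^j, d_i^j$, and scalars $e_i^j$ with
\begin{equation*}
C_i = \braces{x \,\middle|\, \exists\, u^i :\ \norm{M_i^j x + N_i^j u^i + p_i^j} \le (c_i^j)^\top x + (d_i^j)^\top u^i + e_i^j,\ j \in [m_i]}.
\end{equation*}
Since $B^i \subseteq S$ and $S$ is bounded, we have $C_i \subseteq \textup{conv}(S)$, which is bounded; being also \socpr\ (hence closed and convex), each $C_i$ is compact. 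I will use this compactness to obtain the exact convex hull rather than merely its closure.

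Next I would write the hull in Minkowski form: $x \in \textup{conv}\bigl(\bigcup_i C_i\bigr)$ if and only if there are multipliers $\lambda_i \ge 0$ with $\sum_i \lambda_i = 1$ and points $x^i \in C_i$ with $x = \sum_i \lambda_i x^i$. To linearize the bilinear products $\lambda_i x^i$, I introduce the homogenized variables $z^i := \lambda_i x^i$ and $w^i := \lambda_i u^i$ and impose the linear constraints $x = \sum_i z^i$, $\sum_i \lambda_i = 1$, and $\lambda_i \ge 0$. The crux is that multiplying each constraint defining $C_i$ by $\lambda_i \ge 0$ yields its perspective,
\begin{equation*}
\norm{M_i^j z^i + N_i^j w^i + \lambda_i p_i^j} \le (c_i^j)^\top z^i + (d_i^j)^\top w^i + \lambda_i e_i^j,
\end{equation*}
which remains a second-order cone constraint, now in the variables $(z^i, w^i, \lambda_i)$. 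The resulting lifted system is a finite family of SOC constraints together with the linear constraints above, so its projection onto the $x$-space is \socpr\ by definition, and the whole construction is explicit.

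It then remains to argue that this projection equals $\textup{conv}\bigl(\bigcup_i C_i\bigr)$. One inclusion is immediate: any certificate $(\lambda_i, x^i, u^i)$ for membership in the hull yields a feasible lifted point via $z^i = \lambda_i x^i$, $w^i = \lambda_i u^i$. For the converse, given a feasible lifted point, each index with $\lambda_i > 0$ recovers $x^i = z^i/\lambda_i \in C_i$ by dividing the perspective constraints through by $\lambda_i$. I expect the main obstacle to be the behavior at $\lambda_i = 0$: there the perspective constraint degenerates to the homogeneous inequality $\norm{M_i^j z^i + N_i^j w^i} \le (c_i^j)^\top z^i + (d_i^j)^\top w^i$, forcing $(z^i, w^i)$ into the recession cone of the lift of $C_i$, and for a general set these unbounded directions would recover only the \emph{closure} of the hull. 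Here, however, compactness of $C_i$ makes its recession cone trivial, so any such $z^i$ projects to a recession direction of $C_i$ and must vanish; thus $\lambda_i = 0$ forces $z^i = 0$, contributing nothing to $x = \sum_i z^i$. Since $\sum_i \lambda_i = 1$ guarantees at least one positive multiplier, every feasible $x$ is exactly a convex combination of points of the $C_i$. This gives $\textup{conv}\bigl(\bigcup_i C_i\bigr)$ exactly, and by (\ref{eq:disj}) yields the \socpr\ of $\textup{conv}(S)$ together with an explicit, constructive representation.
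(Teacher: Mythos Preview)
Your proposal is correct; it is precisely the standard perspective/homogenization construction from Ben-Tal and Nemirovski, with the appropriate use of compactness of the $C_i$ to ensure that the lifted description captures the exact convex hull rather than only its closure. The paper does not supply its own proof of this observation but simply invokes \cite{ben2001lectures}, so you have filled in exactly the argument the citation points to.
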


\subsection{Reduction}\label{sec:red}
In this section, we discuss how we can apply some transformations to the set $S$ defined in (\ref{setS}) so as to re-write it in a ``canonical'' form where all the quadratic terms are squared terms. This will allows us to easily classify $S$ into Case~1 and 2 as discussed in Section~\ref{section: our result}. We start with the following observation.

\begin{observation}\label{obs:convbi}
Let $S\subseteq \mathbb{R}^n$ and let $F:\mathbb{R}^n \rightarrow \mathbb{R}^n$ be an affine map. Then 
$$\textup{conv}(F(S)) = F(\textup{conv}(S)),$$
where $F(S):= \{ Fx\,|\, x \in S\}$. 
Furthermore if $\textup{conv}(S)$ is \socpr, then $\textup{conv}(F(S)) $ is also \socpr. 
\end{observation}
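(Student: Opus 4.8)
The plan is to establish the set identity first and then bootstrap the representability claim from the extended formulation that underlies it.

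For $\textup{conv}(F(S)) = F(\textup{conv}(S))$, I would write the affine map as $F(x) = Mx + c$ with $M \in \mathbb{R}^{n\times n}$, $c \in \mathbb{R}^n$, and prove the two inclusions by a direct computation with convex combinations. For $\textup{conv}(F(S)) \subseteq F(\textup{conv}(S))$: an arbitrary $y \in \textup{conv}(F(S))$ is a finite convex combination $y = \sum_i \lambda_i F(x_i)$ with $x_i \in S$, $\lambda_i \ge 0$, $\sum_i \lambda_i = 1$; using $\sum_i \lambda_i = 1$ one checks that $\sum_i \lambda_i F(x_i) = M\big(\sum_i \lambda_i x_i\big) + c = F\big(\sum_i \lambda_i x_i\big)$, and since $\sum_i \lambda_i x_i \in \textup{conv}(S)$ this shows $y \in F(\textup{conv}(S))$. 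The reverse inclusion $F(\textup{conv}(S)) \subseteq \textup{conv}(F(S))$ follows from the same algebra read in the other direction: $y = F(x)$ with $x = \sum_i \lambda_i x_i$, $x_i \in S$, gives $y = \sum_i \lambda_i F(x_i) \in \textup{conv}(F(S))$. The only subtlety worth flagging is that it is the affine (not merely linear) character of $F$ that requires invoking $\sum_i \lambda_i = 1$; the rest is bookkeeping.

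For the representability statement, I would unwind the definition of \socpr\ : $\textup{conv}(S)$ being \socpr\ means there is a set $\mathcal{C} \subseteq \mathbb{R}^n \times \mathbb{R}^m$ described by finitely many linear and second-order cone constraints with $\textup{conv}(S) = \{x \mid \exists\, u,\ (x,u) \in \mathcal{C}\}$. Then, by the first part,
\[
\textup{conv}(F(S)) = F(\textup{conv}(S)) = \{\, y \mid \exists\, x, u \ :\ y = Mx + c,\ (x,u) \in \mathcal{C}\,\}.
\]
The set $\{(y,x,u) \mid y = Mx + c,\ (x,u) \in \mathcal{C}\}$ is itself cut out by finitely many linear and second-order cone constraints, and $\textup{conv}(F(S))$ is precisely its projection onto the $y$-coordinates; since the class of \socpr\ sets is closed under such projections, we conclude that $\textup{conv}(F(S))$ is \socpr.

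I do not anticipate a real obstacle; the statement is elementary. The one point that deserves care is making the extended-formulation step rigorous—i.e., explicitly appealing to the standard fact (also used implicitly in Observation~\ref{disjunctionSOC}, cf.\ \cite{ben2001lectures}) that adjoining the linear block $y = Mx + c$ and projecting out the variables $(x,u)$ keeps one inside the class of \socpr\ sets.
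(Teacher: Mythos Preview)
Your argument is correct: the set identity follows from the routine convex-combination calculation you describe, and the representability claim follows from closure of \socpr\ sets under affine preimages/images and coordinate projection, exactly as you outline. There is nothing to compare against, since the paper states Observation~\ref{obs:convbi} without proof (treating it as a standard fact); your write-up supplies precisely the details one would expect.
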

Let $S$ be the set defined in (\ref{setS}). Suppose, without loss of generality, that $Q$ is a symmetric matrix. By the spectral theorem $Q= V^{\top}\Sigma V $, where $\Sigma$ is a diagonal matrix and the columns of $V$ are a set of orthogonal vectors. 
Letting 
$w = Vx,$
we have that 
$$S:= V^{-1}\left(\{ w\,|\, w^{\top}\Sigma w + \alpha^{\top}V^{-1}w= d, \ w \in \tilde{P}\}\right),$$
where $\tilde{P} := \{w  \,|\, AV^{-1}w \leq b\}$. 

Therefore, by Observation~\ref{obs:convbi}, it is sufficient to study the convex hull of a set of the form:
\begin{eqnarray*}
S: =\left\{ (x,y, z) \in \mathbb{R}^n\,|\, \ \sum_{i = 1}^{n_q}a_ix^2_i + \sum_{i =1}^{n_q} \alpha_i x_i +\sum_{j =1}^{n_l}\beta_jy_j = g, \ (x, y, z) \in P \right\},
\end{eqnarray*}
where $z \in \mathbb{R}^{n_o}$ does not appear in the quadratic constraints, $n_q + n_l + n_o  = n$, $a_i \neq 0$ for $i \in [n_q]$ (i.e., the rank of $Q$ is $n_q$) and $\beta_j \neq 0 $ for $j \in [n_l]$.  By completing squares, we may further write $S$ as:
\begin{eqnarray*}
S: =\{ (x,y, z) \in \mathbb{R}^n\,|\, \ \sum_{i = 1}^{n_q}\sigma(a_i) \left(\sqrt{|a_i|}x_i + \sigma(a_i) \frac{\alpha_i}{2\sqrt{|a_i|}} \right)^2+ \sum_{i =1}^{n_l}\beta_iy_i = g + \sum_{i = 1}^{n_q}  \frac{\alpha_i^2}{4a_i}, \ (x, y, z) \in P\},
\end{eqnarray*}
where $\sigma(a)$ denotes the sign of $a$. Now, since $u_i = \left(\sqrt{|a_i|}x_i + \sigma(a_i) \frac{\alpha_i}{2\sqrt{|a_i|}} \right)$  for $i \in [n_q]$ and $v_i = \beta_i y_i$ for $i \in [n_l]$ define linear bijections, it follows from Observation~\ref{obs:convbi} that it is sufficient to study the convex hull of the following set:
\begin{eqnarray}\label{eq:final}
S: =\{ (w, x,y, z) \in \mathbb{R}^{n_{q+}} \times \mathbb{R}^{n_{q-}} \times \mathbb{R}^{n_{l}}\times \mathbb{R}^{n_{o}} \,|\, \ \sum_{i = 1}^{n_{q+}}w^2_i - \sum_{j =1}^{n_{q-}}x^2_j + \sum_{k = 1}^{n_l}y_k = g, \ (w, x, y, z) \in P\}, \label{S:canonical}
\end{eqnarray}
where we may further assume that $g \geq 0$, since otherwise we may multiply the equation by $-1$ and apply suitable affine transformations to bring it back to the form of (\ref{eq:final}).

\subsection{Recursive argument to prove Theorem~\ref{thm:1}}\label{section: proof of thm1}

We begin by stating a variant of Observation~\ref{obs:convbi} that we will use twice along the proof.
\begin{lemma}\label{lemma: convex of intersection with affine set}
Let $G=\{(x,w)\in\mathbb{R}^{n_1} \times \mathbb{R}^{n_2}\,|\,  x\in G_0, \ w=C^{\top}x+h\}$, where $G_0\subseteq\rr^{n_1}$ is bounded, and $C^{\top}x + h$ is an affine function of $x$. Then,
$$\convex(G) = \{(x,w)\in\mathbb{R}^{n_1} \times \mathbb{R}^{n_2}\,|\, x\in \convex(G_0), \ w=C^{\top}x+h\}.$$
\end{lemma}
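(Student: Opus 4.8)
The plan is to recognize $G$ as the image of the bounded set $G_0$ under a single affine map and then invoke Observation~\ref{obs:convbi}. Concretely, define $F:\mathbb{R}^{n_1}\to\mathbb{R}^{n_1}\times\mathbb{R}^{n_2}$ by $F(x)=(x,\,C^{\top}x+h)$; this map is affine, $G=F(G_0)$, and the set on the right-hand side of the lemma is exactly $F(\convex(G_0))$. Since Observation~\ref{obs:convbi} is stated for affine \emph{self}-maps of $\mathbb{R}^n$, I would first note that its proof only uses that affine maps carry convex combinations to convex combinations, so it applies verbatim to affine maps between Euclidean spaces of different dimensions; alternatively, one can pad and apply Observation~\ref{obs:convbi} to the affine self-map $(x,w)\mapsto(x,\,C^{\top}x+h)$ of $\mathbb{R}^{n_1}\times\mathbb{R}^{n_2}$ and to the set $G_0\times\{0\}$, whose convex hull is $\convex(G_0)\times\{0\}$. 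In either case one gets $\convex(G)=\convex(F(G_0))=F(\convex(G_0))$, which is precisely the claimed identity.

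If a self-contained argument is preferred, I would instead verify the two inclusions by hand. Let $H:=\{(x,w)\in\mathbb{R}^{n_1}\times\mathbb{R}^{n_2}\,|\,x\in\convex(G_0),\ w=C^{\top}x+h\}$. For the inclusion $\convex(G)\subseteq H$: the set $H$ is convex, since for $(x_1,w_1),(x_2,w_2)\in H$ and $\lambda\in[0,1]$ we have $\lambda x_1+(1-\lambda)x_2\in\convex(G_0)$ and, by affinity of $C^{\top}(\cdot)+h$, the $w$-coordinate of the convex combination equals $C^{\top}(\lambda x_1+(1-\lambda)x_2)+h$; moreover $G\subseteq H$ because $G_0\subseteq\convex(G_0)$, so the smallest convex set containing $G$ is contained in $H$. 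For the reverse inclusion $H\subseteq\convex(G)$: given $(x,w)\in H$, write $x=\sum_j\lambda_j x_j$ as a finite convex combination of points $x_j\in G_0$ (possible since $x\in\convex(G_0)$); then each $(x_j,\,C^{\top}x_j+h)\in G$, and forming the convex combination with weights $\lambda_j$ yields, again by affinity, $\sum_j\lambda_j(x_j,\,C^{\top}x_j+h)=(x,\,C^{\top}x+h)=(x,w)$, so $(x,w)\in\convex(G)$.

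There is essentially no real obstacle here; the only point deserving a word of care is the dimension mismatch between the map $x\mapsto(x,\,C^{\top}x+h)$ and the self-map hypothesis of Observation~\ref{obs:convbi}, which the padding trick above disposes of. Note that boundedness of $G_0$ is not needed for the argument itself (it is merely inherited by $G$ and will be used when this lemma is applied inside the compactness argument of Section~\ref{section: proof of thm1}).
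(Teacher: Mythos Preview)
Your argument is correct. Both routes you propose---the one via Observation~\ref{obs:convbi} (with the padding trick to match the self-map hypothesis) and the direct double-inclusion verification---are valid, and your remark that boundedness of $G_0$ plays no role in the identity itself is accurate.

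There is nothing to compare against: the paper does not prove this lemma but simply cites Lemma~4 of~\cite{DeySantana2018}. Your self-contained proof is therefore strictly more than what the paper provides.
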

\begin{proof}
See Lemma~4 in \cite{DeySantana2018}.
\end{proof}

\subsubsection{Dealing with low dimensional polytope}\label{sec:lowdim}
Let $S$ and $P$ be defined as in (\ref{setS}).
Next, we show that we may assume without loss of generality that $P$ is full dimension. In fact, if $P$ is not full dimensional, then $P$ is contained in a non-trivial affine subspace defined by a system of linear equations $M x = f$. Without loss of generality, we may assume that $M$ has full row-rank $k$, $1\leq k<n$. Let 
$
M = 
\begin{bmatrix}
M_B & M_N
\end{bmatrix}
$ where $M_B$ is invertible. Then, we may write this system as $x_B = -M^{-1}_BM_Nx_N+M^{-1}_Bf$, where $x_B \in \mathbb{R}^{k}, \ x_N \in \mathbb{R}^{n-k}$ and, for simplicity, we assume that $x_B$ (resp. $x_N$) correspond to the first $k$ (resp. last $n-k$) components of $x$. By defining $C=-M^{-1}_BM_N$ and $h = M^{-1}_Bf$ to simplify notation, we obtain
\begin{equation}
x_B = Cx_N+h.\label{xB}
\end{equation} 
By partitioning $Q$ in sub-matrices of appropriate sizes, we  may explicitly write the quadratic equation defining $S$ in terms of $x_B$ and $x_N$ as follows:
\begin{align}
\begin{bmatrix}
x_B^{\top} & x_N^{\top}
\end{bmatrix}
\begin{bmatrix}
Q_{BB} & Q_{BN}\\
Q_{NB} & Q_{NN}
\end{bmatrix}
\begin{bmatrix}
x_B\\ 
x_N
\end{bmatrix}
+ \alpha^{\top} 
\begin{bmatrix}
x_B\\ 
x_N
\end{bmatrix} = g.\label{eq:xBxN}
\end{align}
Using (\ref{xB}), we replace $x_B$ in (\ref{eq:xBxN}) to obtain
\begin{align*}
x_N^{\top}\tilde{Q}x_N + \tilde{\alpha}^{\top} x_N = \tilde{g},
\end{align*}
where 
\begin{align*}
\tilde{Q} &= C^{\top}Q_{BB}C + C^{\top}Q_{BN} + Q_{NB}C + Q_{NN},\\\tilde{\alpha} &= 2C^{\top}Q_{BB}h + Q_{BN}^{\top}h + Q_{NB}h + C^{\top}\alpha_B + \alpha_N,\\
\tilde{g} &= g - h^{\top}Q_{BB}h - \alpha_B^{\top}h.
\end{align*}
Therefore, we may write $S$ as
\begin{align}
S: =\{ (x_B,x_N) \in \mathbb{R}^n\,|\, \ x_N^{\top}\tilde{Q}x_N + \tilde{\alpha}^{\top} x_N = \tilde{g}, \ x_N \in \tilde{P}, \ x_B = Cx_N+h\},\label{setS-full-P}
\end{align}
where $\tilde{P}$ is now a full dimensional polytope. Therefore, by Lemma~\ref{lemma: convex of intersection with affine set}, we may assume from now on that $P$ is full dimensional.

\subsubsection{Case 2: Sufficient conditions for points to not be extreme}

Consider the set $S$ as defined in (\ref{S:canonical}).
\begin{lemma}\label{lemma:no}
Suppose $n_o\geq 1$. If $(a, b, c, d) \in S \cap ( \mathbb{R}^{n_{q+}} \times  \mathbb{R}^{n_{q-}} \times \mathbb{R}^{n_{l}} \times \mathbb{R}^{n_{o}})$ where $(a, b, c, d) \in \textup{int}(P)$, then $(a, b, c, d)$ is not an extreme point of $S$.
\end{lemma}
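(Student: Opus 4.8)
The plan is to use the fact that, in the canonical form \eqref{S:canonical}, the block $z\in\mathbb{R}^{n_o}$ of variables does not occur in the quadratic equation at all; the only place $z$ is constrained is through membership in the polytope $P$. So when $n_o\ge 1$ we can always move a point of $S$ along a pure $z$-direction without leaving the quadratic surface, and starting from a point in $\textup{int}(P)$ such a move also stays inside $P$ for a small enough step. This immediately displays the point as a strict convex combination of two other points of $S$.

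Concretely, I would set $p=(a,b,c,d)\in S$ with $p\in\textup{int}(P)$ and let $u=(0,0,0,e_1)$ where $e_1$ is the first standard basis vector of $\mathbb{R}^{n_o}$; here is exactly where the hypothesis $n_o\ge 1$ is used, to guarantee a nonzero such direction exists. Since $p\in\textup{int}(P)$, there is $\varepsilon>0$ with $p+\varepsilon u\in P$ and $p-\varepsilon u\in P$. The left-hand side $\sum_{i}w_i^2-\sum_j x_j^2+\sum_k y_k$ of the defining equation of $S$ in \eqref{S:canonical} does not involve the $z$-coordinates, so its value at $p\pm\varepsilon u$ coincides with its value at $p$, namely $g$; hence $p+\varepsilon u\in S$ and $p-\varepsilon u\in S$.

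Finally, I would conclude by writing $p=\tfrac12(p+\varepsilon u)+\tfrac12(p-\varepsilon u)$, a convex combination of two distinct points of $S$, so that $p\notin\textup{extr}(S)$, as claimed.

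I do not expect any real obstacle here: all the substance is in the reduction of Section~\ref{sec:red}, which segregates the variables absent from the quadratic equation into the $z$-block; given that normal form, the lemma is a one-line perturbation argument. The only points requiring a word of care are (i) invoking $n_o\ge 1$ so that a genuine $z$-direction is available, and (ii) observing that the perturbed points remain on the quadratic surface precisely because the $z$-coordinates do not appear in the equation (so no compensating adjustment of the other coordinates is needed).
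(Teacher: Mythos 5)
Your argument is correct and is essentially identical to the paper's proof: both perturb the point along a nonzero $z$-direction (possible since $n_o\ge 1$ and the point is in $\textup{int}(P)$), note that the quadratic equation is unaffected because $z$ does not appear in it, and conclude the point is a midpoint of two distinct points of $S$, hence not extreme.
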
 
\begin{proof}
Since $(a, b, c, d) \in \textup{int}(P)$, there exists a vector $\delta \in \mathbb{R}^{n_o}\setminus \{0\}$ such that $(a,b,c,d + \delta), (a,b,c,d - \delta) \in P$. Clearly these points are in $S$ as well and, therefore, $(a, b, c, d)$ is not an extreme point of $S$
\end{proof}
	
\begin{lemma}\label{lemma:n3 geq 2}
Suppose $n_o  = 0$ and $n_l \geq 2$. If $(a, b, c) \in S \cap ( \mathbb{R}^{n_{q+}} \times  \mathbb{R}^{n_{q-}} \times \mathbb{R}^{n_{l}}) $ where $(a, b, c) \in \textup{int}(P)$, then $(a, b, c)$ is not an extreme point of $S$.
\end{lemma}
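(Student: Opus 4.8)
The plan is to mimic the argument of Lemma~\ref{lemma:no}: exhibit a straight line through $(a,b,c)$ that lies entirely in the quadratic surface, and then invoke the interiority of $(a,b,c)$ in $P$ to slide a short segment of that line inside $S$. The point to exploit is that in the canonical form (\ref{S:canonical}) the linear variables $y_1,\dots,y_{n_l}$ enter the quadratic equation only through their sum $\sum_{k=1}^{n_l} y_k$, so any translation that preserves this sum preserves membership in the surface.

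Concretely, since $n_l\geq 2$, I would define a direction $\delta\in\mathbb{R}^{n_{q+}}\times\mathbb{R}^{n_{q-}}\times\mathbb{R}^{n_l}$ that is zero in every coordinate except the two coordinates corresponding to $y_1$ and $y_2$, where it takes the values $+1$ and $-1$ respectively. Then for every $t\in\mathbb{R}$ the point $(a,b,c)+t\delta$ still satisfies $\sum_{i=1}^{n_{q+}}w_i^2-\sum_{j=1}^{n_{q-}}x_j^2+\sum_{k=1}^{n_l}y_k=g$, because the $w$- and $x$-coordinates are untouched and $\sum_{k=1}^{n_l}y_k$ is unchanged. Because $(a,b,c)\in\textup{int}(P)$, there is $\varepsilon>0$ with $(a,b,c)\pm\varepsilon\delta\in P$, and hence both of these points lie in $S$. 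Since $\delta\neq 0$, these are two distinct points of $S$ whose midpoint is $(a,b,c)$, so $(a,b,c)$ is not an extreme point of $S$.

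There is essentially no hard step here; the only thing to get right is the bookkeeping that $\delta$ is a legitimate direction (it requires exactly $n_l\geq 2$, which is the hypothesis) and that the quadratic equation is genuinely invariant under the translation, which is immediate from the canonical form. This lemma, together with Lemma~\ref{lemma:no}, will later let us restrict attention to the remaining regime $n_o=0$, $n_l\leq 1$, where the surface splits into two convex pieces (Case~1) or the extreme points all lie on facets of $P$ and the recursion on dimension can be applied.
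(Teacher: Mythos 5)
Your proof is correct and is essentially the paper's own argument: the paper likewise perturbs the first two linear coordinates in opposite directions, using $(a,b,c_1\pm\lambda,c_2\mp\lambda,\dots,c_{n_l})$ for small $\lambda>0$, which preserves $\sum_k y_k$ and hence the quadratic equation, and uses interiority in $P$ to keep both points feasible.
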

\begin{proof}
Since $n_l \geq 2$, $(a,b,c_1 \pm \lambda, c_2 \mp \lambda, \dots, c_{n_3})$ are feasible for sufficiently small positive values of $\lambda$. Therefore, $(a, b, c)$ is not an extreme point.
\end{proof}

\begin{lemma}\label{lemma:n3 eq 1 and n1 n2 geq 1}
Suppose $n_o = 0$, $n_{q+}, n_{q-}\geq 1$ and $n_l =1$. If $(a, b, c) \in S \cap ( \mathbb{R}^{n_{q+}} \times  \mathbb{R}^{n_{q-}} \times \mathbb{R}^{n_{l}})$ where $(a, b, c) \in \textup{int}(P)$, then $(a, b, c)$ is not an extreme point of $S$.
\end{lemma}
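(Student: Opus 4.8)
The plan is to exhibit, through the point $(a,b,c)$, a straight line that lies entirely on the quadric surface $\{\,\sum_{i=1}^{n_{q+}}w_i^2 - \sum_{j=1}^{n_{q-}}x_j^2 + y = g\,\}$ (recall that $n_l=1$, so there is a single linear variable, which I call $y$); interiority of $P$ then produces a nondegenerate segment of $S$ centered at $(a,b,c)$, showing that $(a,b,c)$ is not extreme. This is precisely the ``ruled surface'' phenomenon of Case~2 from Section~\ref{section: our result}, specialized to $n_{q+},n_{q-}\ge 1$. Unlike Lemmas~\ref{lemma:no} and~\ref{lemma:n3 geq 2}, there is no ``free'' coordinate available to perturb, so the line must genuinely use the structure of the quadratic.

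Concretely, I would search for a direction $\delta=(\Delta w,\Delta x,\Delta y)\neq 0$ such that $(a,b,c)+t\delta$ satisfies the quadratic equation for every $t\in\mathbb{R}$. Substituting and using that $(a,b,c)$ itself satisfies the equation, the constant-in-$t$ term cancels, and one is left with the requirement that the coefficients of $t$ and of $t^2$ vanish, i.e.
\[
\sum_{i=1}^{n_{q+}}\Delta w_i^2 - \sum_{j=1}^{n_{q-}}\Delta x_j^2 = 0,
\qquad
2\Bigl(\sum_{i=1}^{n_{q+}}a_i\Delta w_i - \sum_{j=1}^{n_{q-}}b_j\Delta x_j\Bigr) + \Delta y = 0.
\]
The first equation is homogeneous of degree two in $(\Delta w,\Delta x)$ and, since $n_{q+},n_{q-}\ge 1$, admits a nonzero solution; for instance, take $\Delta w_1=\Delta x_1=1$ and all other components of $\Delta w$ and $\Delta x$ equal to $0$. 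With this choice fixed, the second equation merely \emph{defines} $\Delta y$ (here $\Delta y=-2(a_1-b_1)$). This produces a nonzero direction $\delta$ with the whole line $\{(a,b,c)+t\delta : t\in\mathbb{R}\}$ contained in the surface.

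Finally, since $(a,b,c)\in\textup{int}(P)$ and $P$ is a polytope, there is $\varepsilon>0$ with $(a,b,c)\pm\varepsilon\delta\in P$, hence both of these points lie in $S$; as they are distinct and average to $(a,b,c)$, the point $(a,b,c)$ is not an extreme point of $S$. I do not anticipate a real obstacle: the only point needing care is that the degree-two homogeneous equation $\sum_i\Delta w_i^2=\sum_j\Delta x_j^2$ has a nonzero solution, which holds exactly because both $n_{q+}$ and $n_{q-}$ are at least one — this is where the hypothesis is used, and it is also why the complementary cases $n_{q+}=0$ or $n_{q-}=0$ (which fall under Case~1) must be handled separately.
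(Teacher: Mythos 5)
Your proposal is correct and is essentially the paper's own argument: the paper exhibits exactly the same line, perturbing $(a_1,b_1,c)$ to $(a_1+\lambda,\,b_1+\lambda,\,c+2\lambda(-a_1+b_1))$, which is your direction $\Delta w_1=\Delta x_1=1$, $\Delta y=-2(a_1-b_1)$. Your derivation via vanishing of the $t$ and $t^2$ coefficients just makes explicit why that choice works.
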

\begin{proof}
Since $n_{q+}, n_{q-} \geq 1$, and $n_l =1$, $(a_1 + \lambda, a_2, \dots, a_{n_{q+}}, b_1 + \lambda, b_2, \dots, b_{n_{q-}}, c + 2\lambda(-a_1 + b_1)$ are feasible for sufficiently small positive and negative values of $\lambda$. Therefore, $(a, b, c)$ is not an extreme point.
\end{proof}

\begin{lemma}\label{lemma: n1 geq 2 and n2 geq 2} 
Suppose $n_o = 0$, $n_{q+} \geq 2$, $n_{q-} \geq 1$ and $n_l=0$. If $(a, b) \in S \cap ( \mathbb{R}^{n_{q+}} \times  \mathbb{R}^{n_{q-}} )$ where $(a, b) \in \textup{int}(P)$, then $(a, b)$ is not an extreme point of $S$.
\end{lemma}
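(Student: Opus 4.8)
The plan is to follow the same pattern as Lemmas~\ref{lemma:n3 geq 2}--\ref{lemma:n3 eq 1 and n1 n2 geq 1}: I will produce a nonzero direction $(p,q)\in\mathbb{R}^{n_{q+}}\times\mathbb{R}^{n_{q-}}$ such that the whole line $\{(a+tp,\,b+tq):t\in\mathbb{R}\}$ lies on the quadric $\sum_{i}w_i^2-\sum_{j}x_j^2=g$. Because $(a,b)\in\textup{int}(P)$, the points $(a\pm tp,\,b\pm tq)$ stay in $P$, hence in $S$, for all sufficiently small $t>0$; since they are distinct and have midpoint $(a,b)$, it follows that $(a,b)$ is not extreme. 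Expanding $\sum_i(a_i+tp_i)^2-\sum_j(b_j+tq_j)^2$ and cancelling the constant term via $(a,b)\in S$, the line lies on the quadric for every $t$ precisely when
\begin{equation*}
\norm{p}^2=\norm{q}^2\qquad\text{and}\qquad \inner{a,p}=\inner{b,q}.
\end{equation*}
Unlike in the earlier lemmas, there is no $y$- or $z$-variable available to absorb a residual linear term, so both equalities are genuinely required, which is the only structural difference from the previous cases.

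To build such a pair I would first record that $g\ge 0$ gives $\norm{a}^2-\norm{b}^2=g\ge 0$, hence $\norm{b}\le\norm{a}$, and that $a=0$ forces $b=0$ (and then $g=0$). If $b\neq 0$ (hence $a\neq 0$), set $q:=b$, so that $\norm{q}=\norm{b}$ and $\inner{b,q}=\norm{b}^2$; I then need $p$ with $\inner{a,p}=\norm{b}^2$ and $\norm{p}=\norm{b}$. Taking $p=\frac{\norm{b}^2}{\norm{a}^2}\,a+r$ with $r\perp a$ satisfies the linear condition automatically, and the norm condition reduces to $\norm{r}^2=\norm{b}^2\,g/\norm{a}^2\ge 0$; since $n_{q+}\ge 2$, the subspace $a^{\perp}\subseteq\mathbb{R}^{n_{q+}}$ has dimension at least one, so an $r$ of the required norm exists. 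If $b=0$ (which also covers $a=0$), pick any unit vector $q\in\mathbb{R}^{n_{q-}}$ and any unit vector $p\in\mathbb{R}^{n_{q+}}$ with $\inner{a,p}=0$, again available because $\dim a^{\perp}\ge n_{q+}-1\ge 1$. In every case $(p,q)\neq 0$ and the two displayed conditions hold, which completes the construction.

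The step I expect to need the most care is the construction of $p$, and it is exactly there that the two standing hypotheses enter: the sign normalization $g\ge 0$ from Section~\ref{sec:red} yields $\norm{b}\le\norm{a}$, which is what makes the sphere $\{\norm{p}=\norm{q}\}$ meet the hyperplane $\{\inner{a,p}=\inner{b,q}\}$ inside $\mathbb{R}^{n_{q+}}$, and $n_{q+}\ge 2$ is what prevents that intersection from degenerating---if $n_{q+}=1$ the sphere is only two points, the intersection can be empty, and no admissible direction need exist. Beyond this, the only remaining work is the routine bookkeeping of the degenerate sub-cases $a=0$ and $b=0$, which collapse into a single easy case.
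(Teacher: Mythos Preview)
Your proposal is correct and follows essentially the same approach as the paper: reduce to the two conditions $\norm{p}^2=\norm{q}^2$ and $\inner{a,p}=\inner{b,q}$, fix the $q$-component, and use $n_{q+}\ge 2$ together with $\norm{b}\le\norm{a}$ (from $g\ge 0$) to find $p$ as the intersection of a sphere with a hyperplane in $\mathbb{R}^{n_{q+}}$. The only cosmetic difference is that the paper fixes $q=e_1$ and splits on $(a,b)=0$ versus $(a,b)\neq 0$, whereas you fix $q=b$ and split on $b=0$ versus $b\neq 0$; both choices lead to the same sphere--hyperplane argument.
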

\begin{proof} We show that there exists a straight line through $(a,b)$ that is entirely contained in the surface defined by the quadratic equation. More specifically, we prove that there exists a vector $(u,v) \in (\mathbb{R}^{n_{q+}} \times \mathbb{R}^{n_{q-}})\setminus \{0\}$ such that the line $\{(a,b) + \lambda (u,v)\,|\, \lambda \in \mathbb{R}\}$ satisfies the quadratic equation and therefore, $(a, b)$ being in the interior of $P$ cannot be an extreme point of $S$.  
We consider two cases:
\begin{enumerate}
\item $(a,b) \neq \textbf{0}$: Then observe that $a \neq \textbf{0}$, since otherwise we would have $a = \textbf{0}$ and $b = \textbf{0}$, because $g\geq 0$. 
Observe that 
\begin{eqnarray}
\sum_{i =1}^{n_{q+}}a_i^2 = g + \sum_{j = 1}^{n_{q-}} b_j^2 \geq b_1^2 \Leftrightarrow \frac{|b_1|}{\|a\|_2} \leq 1.\label{eq:dist}
\end{eqnarray}
Next, observe that:
\begin{eqnarray}
g &=& \sum_{i = 1}^{n_{q+}}( a_i + \lambda u_i)^2 -\sum_{i = 1}^{n_{q-}}(b_i + \lambda v_i)^2 \ \forall \lambda \in \mathbb{R} \nonumber\\
\Leftrightarrow  g & = & \left( \sum_{i = 1}^{n_{q+}}a_i^2 -  \sum_{i = 1}^{n_{q-}}b_i^2 \right) + \lambda^2 \left( \sum_{i = 1}^{n_{q+}}u_i^2 -  \sum_{i = 1}^{n_{q-}}v_i^2 \right) + 2\lambda \left(\sum_{i = 1}^{n_{q+}}a_iu_i -  \sum_{i = 1}^{n_{q-}}b_iv_i \right) \ \forall \lambda \in \mathbb{R}\nonumber \\
\Leftrightarrow  0 & = & \lambda \left(  \sum_{i = 1}^{n_{q+}}u_i^2 -  \sum_{i = 1}^{n_{q-}}v_i^2 \right) + 2\left(\sum_{i = 1}^{n_{q+}}a_iu_i -  \sum_{i = 1}^{n_{q-}}b_iv_i \right) \ \forall \lambda \in \mathbb{R} \nonumber\\
\Leftrightarrow &&   \sum_{i = 1}^{n_{q+}}u_i^2 -  \sum_{i = 1}^{n_{q-}}v_i^2 = 0, \ \sum_{i = 1}^{n_{q+}}a_iu_i -  \sum_{i = 1}^{n_{q-}}b_iv_i =0. \label{eq:1}
\end{eqnarray}
Suppose we set $v_1 = 1$ and $v_j =0$ for all $j \in \{2,\dots,n_{q-}\}$. Then satisfying (\ref{eq:1}) is equivalent to finding real values of $u$ satisfying:
\begin{eqnarray*}
\sum_{i = 1}^{n_{q+}}u_i^2  =1, \ \
\sum_{i = 1}^{n_{q+}}a_iu_i  = b_1.
\end{eqnarray*}
This is the intersection of a circle of radius $1$ in dimension two or higher (since $n_{q+}\geq 2$ in this case) and a hyperplane whose distance from the orgin is $\frac{|b_1|}{\|a\|_2}$. Since, by (\ref{eq:dist}), we have that this distance is at most $1$, the hyperplane intersects the circle and therefore we know that a real solution exists. 
\item $(a,b) = \textbf{0}$: In this case, observe that $g = 0$ and then $\textbf{0}$ is a convex combination of
$$( \underbrace{\pm\lambda, 0, \dots, 0}_{\textup{first } n_{q+} \textup{ components} }, \underbrace{\pm\lambda, 0, \dots, 0}_{\textup{second } n_{q-} \textup{ components}})$$  for sufficiently small $\lambda > 0$.
\end{enumerate}
\end{proof}

\subsubsection{Case 1: Sufficient conditions for convex hull to be \socpr}

In this section, we repeatedly use the following result from~\cite{Tawarmalani2013}.
\begin{theorem}\label{thm:2}
Let $G \subseteq \mathbb{R}^n$ be a convex set and let $f: \mathbb{R}^n \rightarrow \mathbb{R}$ be a continuous function.  Then 
\begin{eqnarray}
\textup{conv}\left(\{G \cap \{x\,|\, f(x) = 0\}\} \right) = \textup{conv}\left(\{G \cap \{x\,|\, f(x) \leq 0\}\} \right) \cap \textup{conv}\left(\{G \cap \{x\,|\, f(x) \geq 0\}\} \right). \nonumber
\end{eqnarray}
\end{theorem}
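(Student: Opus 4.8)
The statement is quoted from~\cite{Tawarmalani2013}, but here is how I would prove it from scratch. Set $A := \{x\in G\,|\, f(x)=0\}$, $B := \{x\in G\,|\, f(x)\le 0\}$ and $C := \{x\in G\,|\, f(x)\ge 0\}$, so the claimed identity reads $\textup{conv}(A) = \textup{conv}(B)\cap\textup{conv}(C)$, and note $A = B\cap C$. The inclusion ``$\subseteq$'' is immediate: $A\subseteq B$ and $A\subseteq C$, and the convex-hull operator is monotone, so $\textup{conv}(A)\subseteq\textup{conv}(B)$ and $\textup{conv}(A)\subseteq\textup{conv}(C)$. All the content is in the reverse inclusion.

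For ``$\supseteq$'', fix $x\in\textup{conv}(B)\cap\textup{conv}(C)$; since $B,C\subseteq G$ and $G$ is convex, $x\in G$. If $f(x)=0$ then $x\in A$ and we are done, so assume (without loss of generality, swapping the roles of $B$ and $C$ and replacing $f$ by $-f$ in the other case) that $f(x)>0$. Write $x = \sum_{i=1}^m \lambda_i b_i$ as a finite convex combination with $b_i\in B$, hence $f(b_i)\le 0$. For each $i$, the segment $[b_i,x]$ lies in $G$ by convexity, $f$ restricted to it is continuous, and $f(b_i)\le 0 < f(x)$; by the intermediate value theorem there is $t_i\in[0,1)$ such that $a_i := (1-t_i)b_i + t_i x$ satisfies $f(a_i)=0$, so $a_i\in A$. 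Here $t_i<1$ because $f(a_i)=0\neq f(x)$, hence $1-t_i>0$.

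The crux is then a short renormalization. Solving $a_i=(1-t_i)b_i+t_i x$ for $b_i$ and substituting into $x=\sum_i\lambda_i b_i$ gives, after collecting the terms in $x$, the identity $(1+T)\,x = \sum_i \tfrac{\lambda_i}{1-t_i}\,a_i$, where $T := \sum_i \tfrac{\lambda_i t_i}{1-t_i}\ge 0$; using $\tfrac{1}{1-t_i} = 1 + \tfrac{t_i}{1-t_i}$ one checks $\sum_i\tfrac{\lambda_i}{1-t_i} = 1+T$, so $x = \sum_i \tfrac{\lambda_i}{(1+T)(1-t_i)}\,a_i$ exhibits $x$ as a genuine convex combination of points $a_i\in A$, i.e.\ $x\in\textup{conv}(A)$.

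The only non-routine step I expect is spotting this renormalization identity; once it is in place, the rest is bookkeeping. The remaining care is with degenerate cases, none of which causes trouble: if $B=\emptyset$ (that is, $f>0$ on all of $G$) then $A=\emptyset$ as well and both sides are empty, and symmetrically for $C$; and one should recall that in $\mathbb{R}^n$ the convex hull consists of finite convex combinations, so the index set $\{1,\dots,m\}$ is finite (Carath\'{e}odory would let us take $m\le n+1$, but this is not needed). Continuity of $f$ enters only through the intermediate value theorem on one-dimensional segments, and convexity of $G$ only to keep those segments — and the point $x$ itself — inside $G$.
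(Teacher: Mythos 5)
Your argument is correct, and it is worth noting that the paper does not actually prove Theorem~\ref{thm:2} at all: it is imported as a black box from the cited reference \cite{Tawarmalani2013}. So your proposal is not an alternative to the paper's proof but a self-contained elementary substitute for the citation. The two nontrivial steps both check out: (i) given $x\in\textup{conv}(B)$ with $f(x)>0$ and a representation $x=\sum_i\lambda_i b_i$, the intermediate value theorem applied to $t\mapsto f((1-t)b_i+tx)$ on each segment $[b_i,x]\subseteq G$ produces points $a_i\in G$ with $f(a_i)=0$ and $t_i<1$; and (ii) the renormalization identity is exact, since $\sum_i\frac{\lambda_i}{1-t_i}=1+T$ with $T=\sum_i\frac{\lambda_i t_i}{1-t_i}\ge 0$, so the coefficients $\frac{\lambda_i}{(1+T)(1-t_i)}$ are nonnegative and sum to one, giving $x\in\textup{conv}(A)$. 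The case analysis is also sound: membership in $\textup{conv}(C)$ is what gets used (symmetrically) when $f(x)<0$, membership in $\textup{conv}(B)$ when $f(x)>0$, and $f(x)=0$ is immediate, so both hypotheses of the intersection are genuinely exploited even though each individual case invokes only one of them. Your proof uses only convexity of $G$ (to keep the segments and $x$ inside $G$) and continuity of $f$ along segments, so it works for arbitrary, possibly unbounded or non-closed, convex $G$, which matches the generality of the statement; the degenerate cases you flag ($B$ or $C$ empty, finiteness of the convex combination) are handled correctly. The only gain from the paper's route is brevity by citation; your route buys a short, verifiable proof that would make the paper self-contained.
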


For the two lemmas that follows, consider the notation of $S$ defined in (\ref{S:canonical}).
\begin{lemma} \label{lemma:n3 leq 1 and n1 n2 eq 0}
Suppose $n_o = 0$, $n_l \leq 1$. If $n_{q+} = 0$ or $n_{q-} = 0$, then $\textup{conv}(S)$ is \socpr.
\end{lemma}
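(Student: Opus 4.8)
The plan is to observe that, under these hypotheses, the quadratic surface defining $S$ is the zero set of a \emph{convex} quadratic, and then to split $\textup{conv}(S)$ with Theorem~\ref{thm:2}. First I would put $S$ in the canonical form (\ref{S:canonical}); with $n_o=0$ and $n_l\le 1$ the defining equation is $\sum_{i=1}^{n_{q+}}w_i^2-\sum_{j=1}^{n_{q-}}x_j^2+\sum_{k=1}^{n_l}y_k=g$, and since $n_{q+}=0$ or $n_{q-}=0$, after multiplying by $-1$ if necessary this reads $f=0$ where $f$ is a convex quadratic (its Hessian is positive semidefinite), so $\{f\le 0\}$ is a convex set. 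Applying Theorem~\ref{thm:2} with $G=P$ gives
$$\textup{conv}(S)=\textup{conv}\bigl(P\cap\{f\le 0\}\bigr)\cap\textup{conv}\bigl(P\cap\{f\ge 0\}\bigr).$$
Here the first factor is simply $P\cap\{f\le 0\}$, a polytope intersected with the sublevel set of a convex quadratic, i.e.\ with a (rotated) second-order cone constraint, hence \socpr. So the entire problem reduces to showing $\textup{conv}(P\cap\{f\ge 0\})$ is \socpr.

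I would establish this by the following claim, proved by induction on the ambient dimension $m$: \emph{for any polytope $P'\subseteq\mathbb{R}^m$ and any convex quadratic $f'$, the set $\textup{conv}(P'\cap\{f'\ge 0\})$ is \socpr}. If $f'\ge 0$ everywhere then $P'\cap\{f'\ge 0\}=P'$ and we are done; and $m\le 1$ is trivial since the convex hull of a bounded subset of a line is an interval. Otherwise $f'$ takes a negative value, so $\nabla f'$ vanishes nowhere on the surface $\{f'=0\}$: a zero of $\nabla f'$ lying on $\{f'=0\}$ would be a global minimizer of the convex $f'$, forcing $f'\ge 0$ everywhere. The heart of the argument is then the claim that no point of $\textup{int}(P')$ is an extreme point of $\textup{conv}(P'\cap\{f'\ge 0\})$: if $p\in\textup{int}(P')$ has $f'(p)>0$ then $p$ is interior to $P'\cap\{f'\ge 0\}$; and if $f'(p)=0$, pick $d\ne 0$ with $\langle\nabla f'(p),d\rangle=0$ (possible as $m\ge 2$), so that $f'(p\pm\varepsilon d)=\tfrac{\varepsilon^2}{2}\,d^\top(\nabla^2 f')\,d\ge 0$ for all $\varepsilon$ by positive semidefiniteness, exhibiting $p$ as the midpoint of the two distinct feasible points $p\pm\varepsilon d$ for small $\varepsilon>0$.

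Consequently every extreme point of $\textup{conv}(P'\cap\{f'\ge 0\})$ lies on $\partial P'$; reducing to the full-dimensional case exactly as in Section~\ref{sec:lowdim} (otherwise restrict to $\textup{aff}(P')$, lowering $m$, and use the induction hypothesis with Observation~\ref{obs:convbi}), this boundary is the union $\bigcup_j F_j$ of the facets of $P'$. By the disjunctive identity (\ref{eq:disj}),
$$\textup{conv}\bigl(P'\cap\{f'\ge 0\}\bigr)=\textup{conv}\Bigl(\textstyle\bigcup_j\textup{conv}\bigl(F_j\cap\{f'\ge 0\}\bigr)\Bigr),$$
and on each facet $F_j$ — a polytope lying in an affine hyperplane — $f'$ restricts to a convex quadratic in $m-1$ variables, so $\textup{conv}(F_j\cap\{f'\ge 0\})$ is \socpr\ by the induction hypothesis (moving between the hyperplane and $\mathbb{R}^m$ via Observation~\ref{obs:convbi}); Observation~\ref{disjunctionSOC} then makes the finite union \socpr, completing the induction. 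Hence $\textup{conv}(S)$, being the intersection of two \socpr\ sets, is \socpr.

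The step I expect to be the real content — the rest is bookkeeping with Theorem~\ref{thm:2}, Observations~\ref{obs:convbi} and \ref{disjunctionSOC}, and the low-dimensional reduction — is the claim that interior points of $P'$ are never extreme in $\textup{conv}(P'\cap\{f'\ge 0\})$. This is where convexity of the quadratic is genuinely used, through the tangent-direction perturbation that stays inside $\{f'\ge 0\}$, and it is exactly what licenses the recursion onto faces and makes the induction terminate.
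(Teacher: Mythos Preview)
Your argument is correct, and the overall architecture matches the paper's: both split $S$ via Theorem~\ref{thm:2} into a convex piece $P\cap\{f\le 0\}$ (which is directly \socpr) and a reverse-convex piece $P\cap\{f\ge 0\}$. The difference is entirely in how the reverse-convex piece is handled. The paper simply invokes a classical result of Hillestad--Jacobsen (1980): the convex hull of a polytope intersected with a reverse-convex set is polyhedral, hence \socpr. You instead supply a self-contained inductive proof of (a quadratic special case of) that fact, using the tangent-direction perturbation $p\pm\varepsilon d$ to kill interior extreme points and then recursing onto facets---exactly the same mechanism the paper uses for the main Theorem~\ref{thm:1}. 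Your route is longer but avoids the external reference and, incidentally, yields more than you claim: since your base case is polyhedral and the convex hull of a finite union of polytopes is a polytope, your induction actually shows $\textup{conv}(P'\cap\{f'\ge 0\})$ is polyhedral, not merely \socpr. The paper's route is a one-line citation; yours is a rediscovery of the cited lemma using the paper's own toolkit.
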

\begin{proof}
We consider two cases.
\begin{enumerate}
\item $n_{q-} = 0$: Let $(w, y)  \in S \cap ( \mathbb{R}^{n_{q+}} \times \mathbb{R}^{n_{l}})$. Let $y = y_1$ if $n_l=1$ and $y = 0$ if $n_l = 0$. In this case, $g - y$ is non-negative for all feasible values of $y$ and we can use the identity $t = \frac{(t+1)^2 - (t-1)^2}{4}$ to write $S = S' \cap S''$, where:
\begin{eqnarray*}
S':= \{(w,y) \in P\,|\, \|2w_1, \dots, 2w_{n_{q+}}, (g - y - 1)\| \leq (g - y + 1)\},\\
S'':= \{(w,y) \in P \,|\, \|2w_1, \dots, 2w_{n_{q+}}, (g - y - 1)\| \geq (g - y + 1)\}.
\end{eqnarray*}
Notice that $S'$ is a \socpr \ convex set. Also notice that $S''$ is a reverse convex set intersected with a polytope and hence $\textup{conv}(S''\cap P)$ is polyhedral and contained in $P$ (see \cite{Hillestad1980},Theorem~1). Therefore, by Theorem~\ref{thm:2}, we have that $\convex{(S)} = \convex{(S')} \cap \convex{(S'')}$ is \socpr.
\item $n_{q+} = 0$: Let $(x, y)  \in S \cap ( \mathbb{R}^{n_{q+}} \times \mathbb{R}^{n_{l}})$. Let $y = y_1$ if $n_l=1$ and $y = 0$ if $n_l = 0$. In this case, $g - y$ is non-positive for all feasible values of $y$ and may write $S = S' \cap S''$, where:
\begin{eqnarray*}
S':= \{(x,y) \in P \,|\, \|2x_1, \dots, 2x_{n_{q-}}, (y - g - 1)\| \leq (y - g + 1)\},\\
S'':= \{(x,y) \in P \,|\, \|2x_1, \dots, 2x_{n_{q-}}, (y - g - 1)\| \geq (y - g + 1).\}
\end{eqnarray*}
Therefore, as in the previous case,
$\convex{(S)}$ is \socpr.
\end{enumerate}
\end{proof}

\begin{lemma}\label{lemma: n1 leq 1 and n3 eq zero}
Suppose $n_{q+} \leq 1$ and $n_l = n_o =0$. Then $\textup{conv}(S)$ is \socpr. 
\end{lemma}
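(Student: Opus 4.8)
The plan is to peel off the genuinely new case, split the quadric into its two sheets, apply Theorem~\ref{thm:2} to each sheet, and combine everything with Observation~\ref{disjunctionSOC}.

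First I would dispose of the degenerate situations. If $n_{q+}=0$ or $n_{q-}=0$, then---since in our setting $n_o=0$ and $n_l=0\le 1$---Lemma~\ref{lemma:n3 leq 1 and n1 n2 eq 0} applies and we are done. It thus remains to treat $n_{q+}=1$ and $n_{q-}\ge 1$. Writing the variables as $(w,x)\in\mathbb{R}\times\mathbb{R}^{n_{q-}}$, the equation in~(\ref{S:canonical}) reads $w^2-\|x\|_2^2=g$ with $g\ge 0$, that is $w=\pm f(x)$ where $f(x):=\sqrt{g+\|x\|_2^2}=\|(x_1,\dots,x_{n_{q-}},\sqrt{g})\|_2$. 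Hence $S=S^+\cup S^-$ with $S^{\pm}:=\{(w,x)\in P\,|\,w=\pm f(x)\}$ (geometrically, the two sheets of a hyperboloid when $g>0$, or the two nappes of a cone when $g=0$). Taking $B^1:=S^+$ and $B^2:=S^-$, which trivially satisfy $S\supseteq B^1\cup B^2\supseteq\textup{extr}(S)$, Observation~\ref{disjunctionSOC} reduces the task to showing that $\textup{conv}(S^+)$ and $\textup{conv}(S^-)$ are \socpr; and since the affine bijection $(w,x)\mapsto(-w,x)$ carries $S^-$ onto a set of the same form as $S^+$ over another polytope, Observation~\ref{obs:convbi} lets us focus on $\textup{conv}(S^+)$ alone.

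The core step is to prove that $\textup{conv}(S^+)$ is \socpr. Since $f$ is a Euclidean norm it is convex, so $\phi(w,x):=w-f(x)$ is continuous and $S^+=P\cap\{(w,x)\,|\,\phi(w,x)=0\}$. Applying Theorem~\ref{thm:2} with $G=P$ gives
$$\textup{conv}(S^+)=\textup{conv}\bigl(P\cap\{\phi\ge 0\}\bigr)\,\cap\,\textup{conv}\bigl(P\cap\{\phi\le 0\}\bigr).$$
For the first factor, $P\cap\{\phi\ge 0\}=P\cap\{(w,x)\,|\,\|(x_1,\dots,x_{n_{q-}},\sqrt{g})\|_2\le w\}$ is the polytope $P$ intersected with a second-order cone, hence a bounded convex \socpr\ set, so it equals its own convex hull. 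For the second factor, $\{\phi\le 0\}=\{(w,x)\,|\,w\le f(x)\}=\mathbb{R}^{1+n_{q-}}\setminus\textup{int}\bigl(\{(w,x)\,|\,w\ge f(x)\}\bigr)$ is the complement of an open convex set, i.e.\ a reverse convex set; thus $P\cap\{\phi\le 0\}$ is a reverse convex set intersected with a polytope, and exactly as in the proof of Lemma~\ref{lemma:n3 leq 1 and n1 n2 eq 0} its convex hull is polyhedral (see \cite{Hillestad1980}, Theorem~1), hence \socpr. Being the intersection of two \socpr\ sets, $\textup{conv}(S^+)$ is \socpr.

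Combining the pieces, $\textup{conv}(S^-)$ is \socpr\ by the symmetry noted above, and therefore $\textup{conv}(S)=\textup{conv}\bigl(\textup{conv}(S^+)\cup\textup{conv}(S^-)\bigr)$ is \socpr\ by Observation~\ref{disjunctionSOC}. I expect the only real subtlety to be the necessity of splitting $S$ into its two sheets \emph{before} invoking Theorem~\ref{thm:2}: applied to the whole quadric, the ``$\le$'' side would be the region $\{(w,x)\,|\,w^2-\|x\|_2^2\le g\}$ lying between the two sheets, which is not convex, so neither factor would be controllable. Splitting first turns the epigraph side into a genuinely convex second-order cone and the hypograph side into a genuinely reverse convex set---precisely the hypotheses the two cited results need---and, since we never project onto $x$-space, the (possibly non-convex) domain $\{x\,|\,(f(x),x)\in P\}$ never enters the picture.
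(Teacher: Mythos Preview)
Your proof is correct and uses the same three ingredients as the paper's---Theorem~\ref{thm:2}, the second-order-cone representation of $\{w\ge\sqrt{g+\|x\|_2^2}\}$, and the polyhedrality of a reverse convex region intersected with a polytope---but you apply them in the opposite order. The paper first invokes Theorem~\ref{thm:2} on the full quadratic $w^2-\|x\|_2^2-g$, obtaining $\textup{conv}(S)=\textup{conv}(S')\cap\textup{conv}(S'')$ with $S'=P\cap\{w^2\ge g+\|x\|_2^2\}$ and $S''=P\cap\{w^2\le g+\|x\|_2^2\}$; since neither of these is itself convex or reverse convex, it then splits each of $S'$ and $S''$ by the sign of $w$ and handles the four resulting pieces. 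You instead split $S$ into its two sheets $S^{\pm}$ first and only then apply Theorem~\ref{thm:2} to each sheet, so that the epigraph side is immediately a convex SOC set and the hypograph side is immediately reverse convex---no secondary sign disjunction is needed. Your route is therefore a bit more streamlined; the only slight overstatement is in your closing paragraph, where you call the sheet-first split a ``necessity'': the paper shows that applying Theorem~\ref{thm:2} to the whole quadric \emph{does} work, it just requires the extra sign split afterward to make each factor controllable.
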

\begin{proof}
If $n_{q+}=0$, then $S$ is empty set or contains a single point, the origin. 

Therefore, consider the case where $n_{q+} = 1$, thus $w=w_1$. 
Notice that $S = S' \cap S''$, where
\begin{align*}
S':= \{(w,x) \in \mathbb{R}^{1} \times \mathbb{R}^{n_{q-}} \,|\, \ w^2 \geq g+\sum_{j=1}^{n_{q-}}x_j^2, \  (w,x) \in P\},\\
S'':= \{(w,x) \in \mathbb{R}^{1} \times \mathbb{R}^{n_{q-}}\,|\, \ w^2 \leq g+\sum_{j=1}^{n_{q-}}x_j^2, \ (w,x) \in P\}.
\end{align*}
By Theorem~\ref{thm:2}, $\textup{conv}(S) = \textup{conv}(S') \cap \textup{conv}(S'')$. Next, we show that both $\textup{conv}(S')$ and $\textup{conv}(S'')$ are \socpr.
Notice that $S'$ is the union of the following two \socpr \ sets:
\begin{align*}
S'_+&:= \left\{ (w,x) \in \mathbb{R}^{1} \times \mathbb{R}^{n_{q-}}\,|\, \ w \geq \left(g+\sum_{j=1}^{n_{q-}}x_j^2\right)^{\frac{1}{2}}, \ w \geq 0, \ (w,x) \in P\right\},\\
&=\text{Proj}_{w,x}\left(\left\{(w,x,t) \in \mathbb{R}^{1} \times \mathbb{R}^{n_{q-}} \times \mathbb{R}\,|\, \ w \geq \left((\sqrt{g}t)^2+\sum_{j=1}^{n_{q-}}x_j^2\right)^{\frac{1}{2}}, \ x \geq 0, \ t = 1, \ (w,x) \in P \right\}\right),\\
S'_-&:= \left\{(w,x) \in \mathbb{R}^{1} \times \mathbb{R}^{n_{q-}}\,|\, \ -w \geq \left(g+\sum_{j=1}^{n_{q-}}x_j^2\right)^{\frac{1}{2}}, \ w \leq 0, \ (w,x) \in P\right\}\\
&= \text{Proj}_{w,x}\left(\left\{(w,x,t) \in \mathbb{R}^{1} \times \mathbb{R}^{n_{q-}}\times\mathbb{R}\,|\, \ -w \geq \left(\sqrt{g}t)^2+\sum_{j=1}^{n_{q-}}x_j^2\right)^{\frac{1}{2}}, \ w \leq 0, \ t=1, \ (w,x) \in P\right\}\right).
\end{align*}
Thus, $\textup{conv}(S') = \textup{conv}(S'_+ \cup S'_-)$ is \socpr. 

Notice that $S'' = \{(w,x) \in \mathbb{R}^{1} \times \mathbb{R}^{n_{q-}}\,|\, \ |w| \leq (g+\sum_{j=1}^{n_{q-}}x_j^2)^{\frac{1}{2}}, \ (w,x) \in P\}$ and is therefore the union of two sets:
\begin{align*}
S''_+&:= \left\{ (w,x) \in \mathbb{R}^{1} \times \mathbb{R}^{n_{q-}}\,|\, \ w \leq\left(g+\sum_{j=1}^{n_{q-}}x_j^2\right)^{\frac{1}{2}}, \ w \geq 0, \ (w,x) \in P\right\},\\
S''_-&:= \left\{(w,x) \in \mathbb{R}^{1} \times \mathbb{R}^{n_{q-}}\,|\, \ -w \leq \left(g+\sum_{j=1}^{n_{q-}}x_j^2\right)^{\frac{1}{2}}, \ w \leq 0, \ (w,x) \in P\right\},
\end{align*}
each of them being a reverse convex set intersected with a polyhedron. Therefore, $\textup{conv}(S''_+)$ and $\textup{conv}(S''_-)$ are polyhedral and therefore
$\textup{conv}(S'') =\textup{conv}(\textup{conv}(S''_+) \cup \textup{conv}(S''_-))$ is a polyhedral set.
\end{proof}
\subsubsection{Proof of Theorem~\ref{thm:1}}

Finally, we bring the pieces together to prove Theorem~\ref{thm:1}. 
\begin{proof}(of Theorem~\ref{thm:1})
Let $S(n)$ be defined as in (\ref{eq:final}), where $n=n_{q+} + n_{q-} + n_l + n_o$ is the dimension of the space in which $S$ is defined and without loss of generality $P$ is full-dimensional (Section~\ref{sec:lowdim}). The proof goes by induction on $n$.  Notice that $S(1)$ is a polytope and hence $\textup{conv}(S(1))$ is \socpr. Suppose $S(n)$ is \socpr. We show that $S(n+1)$ is \socpr \ as well. If $n_o = 0$, $n_l\leq 1$, and $n_{q+} =0$ or $n_{q-}=0$, then the result follows from Lemma~\ref{lemma:n3 leq 1 and n1 n2 eq 0}. Similarly, if $n_o = 0$, $n_{q+} \leq 1$ and $n_{l}=0$, then the result follows from Lemma~\ref{lemma: n1 leq 1 and n3 eq zero}. Otherwise, it follows from Lemma~\ref{lemma:no}, \ref{lemma:n3 geq 2}, \ref{lemma:n3 eq 1 and n1 n2 geq 1} and \ref{lemma: n1 geq 2 and n2 geq 2} that no point in the interior of $P$ can be an extreme point of $S(n+1)$. Let $N$ be the number of facets of $P$, each of which given by one equation of the linear system $Fx = f$. Let $B^i = S(n+1) \cap \{x\in\mathbb{R}^{n+1}\,|\, F_{i.}x = f_i\}$ be the intersection of $S(n+1)$ with the $i$th facet of $P$. By the discussion in Section~\ref{section: convex hulls via disjunctions}, it is enough to show that the convex hull of each $B^i$ is \socpr. Let $i\in\{1,\dots,N\}$. Choose $j_0$ such that $F_{ij_0}\neq 0$. For simplicity, suppose $j_0 = 1$. Then, we may write $B^i = \{x\in\mathbb{R}^{n+1}\,|\, (x_2,\dots,n_{n+1})\in B^i_0, \ x_1 = b_i - \sum_{j=2}^{n+1}F_{ij}x_j\}$, where $B_0^i$ is obtained from $B^i$ by replacing $x_1 = f_i - \sum_{j=2}^{n+1}F_{ij}x_j$ in all the constraints defining $S(n+1)$. Now $\textup{conv}(B_0^i) \subseteq \mathbb{R}^n$ is \socpr \ by induction hyptothesis. Therefore, $\textup{conv}(B^i)$ is \socpr \ by Lemma~\ref{lemma: convex of intersection with affine set}.
\end{proof}

\section*{Acknowledgments}

Funding: This work was supported by the NSF CMMI [grant number 1562578] and the CNPq-Brazil [grant number 248941/2013-5].

\bibliographystyle{plain}
\bibliography{QCQP_bibfile_v1}

\end{document}